\newtheorem{theorem}{Theorem}[section]
\newtheorem{lemma}[theorem]{Lemma}
\newtheorem{proposition}[theorem]{Proposition}
\newtheorem{conjecture}[theorem]{Conjecture}
\theoremstyle{definition}
\newtheorem{definition}[theorem]{Definition}
\theoremstyle{remark}
\newtheorem{remark}[theorem]{Remark}
\numberwithin{equation}{section}
\DeclareMathOperator*{\esssup}{ess\,sup}
\newcommand{\absval}[1]{\mbox{$|#1|$}}
\newcommand{\R}{\mathcal{R}}
\newcommand{\ccR }{\mathfrak{R}}
\newcommand{\I }{\mathcal I}
\def\Xint#1{\mathchoice
  {\XXint\displaystyle\textstyle{#1}}%
  {\XXint\textstyle\scriptstyle{#1}}%
  {\XXint\scriptstyle\scriptscriptstyle{#1}}%
  {\XXint\scriptscriptstyle\scriptscriptstyle{#1}}%
  \!\int}
\def\XXint#1#2#3{{\setbox0=\hbox{$#1{#2#3}{\int}$}
    \vcenter{\hbox{$#2#3$}}\kern-.5\wd0}}
\def\avgint{\Xint-}
\newcommand{\vertiii}[1]{{\left\vert\kern-0.25ex\left\vert\kern-0.25ex\left\vert #1 
    \right\vert\kern-0.25ex\right\vert\kern-0.25ex\right\vert}}
\numberwithin{equation}{section}
\definecolor{caro}{rgb}{0.0, 0.5, 0.0}
\begin{document}

 \title[ Biparametric Poincar\'e through maximal]{Some non-standard biparametric Poincar\'e type inequalities through harmonic analysis}

\author[M.E. Cejas]{Mar\'ia Eugenia Cejas}
\address[Mar\'ia Eugenia Cejas]{Departamento de Matem\'atica, Facultad de Ciencias Exactas, Universidad Nacional de La Plata, Calle 50 y 115, (1900) La Plata, Prov. de Buenos Aires, Argentina.}
 \email{ecejas@mate.unlp.edu.ar}

\author[C. Mosquera]{Carolina Mosquera}
\address[Carolina Mosquera]{Department of Mathematics,
Facultad de Ciencias Exactas y Naturales,
University of Buenos Aires, Ciudad Universitaria
Pabell\'on I, Buenos Aires 1428 Capital Federal Argentina} \email{mosquera@dm.uba.ar}

\author[C. P\'erez]{Carlos P\'erez}
\address[Carlos P\'erez]{ Department of Mathematics, University of the Basque Country, IKERBASQUE 
(Basque Foundation for Science) and
BCAM \textendash  Basque Center for Applied Mathematics, Bilbao, Spain}
\email{cperez@bcamath.org}

\author[E. Rela]{Ezequiel Rela}
\address[Ezequiel Rela]{Department of Mathematics,
Facultad de Ciencias Exactas y Naturales,
University of Buenos Aires, Ciudad Universitaria
Pabell\'on I, Buenos Aires 1428 Capital Federal Argentina} \email{erela@dm.uba.ar}

\thanks{M.E.C. is partially supported by grant PICT-2018-03017 (ANPCYT)}
\thanks{C.M. is partially supported by grants UBACyT 20020170100430BA, PICT 2018--03399 and PICT 2018--04027.}
\thanks{C. P. and E.R. have been supported by the Basque Government through the IT1247-19 project, the BERC 2018-
2021 program  and by the Spanish State Research Agency through BCAM Severo
Ochoa excellence accreditation SEV-2017-0718. C.P is also supported  by the  project 
PID2020-113156GB-I00/AEI /10.13039/501100011033 funded by Spanish State Research Agency and acronym ``HAPDE".}
\thanks{E.R. is partially supported by grants PICT-2019--03968, PICT 2018-3399, UBACyT 20020190200230BA and PIP (CONICET) 11220110101018 }
\subjclass{Primary: 42B25. Secondary: 42B20.}

\keywords{Poincar\'e - Sobolev inequalities. Muckenhoupt weights.}

\begin{abstract}
We show some non-standard Poincar\'e type estimates  in the biparametric setting with appropriate weights. We will derive these results using variants from classical estimates exploiting the interplay between maximal functions and fractional integrals. We also provide a sharper result by using extrapolation techniques.
\end{abstract}

\maketitle

\normalem

\begin{center}
\end{center}

\

\section{Introduction and Main Results}\label{sec:Intro-Main}

In the present article we will study non-standard variations of the very well known Poincar\'e-Sobolev inequalities. More precisely, we are interested in biparametric extensions of the classical Poincar\'e-Sobolev inequality of the form
\begin{equation}\label{eq:PSpp*}
\left( \frac{1}{|Q|}\int_{Q} \absval{f-f_{Q}}^{p^{*}}
\right)^{1/p^{*}}
\le
c\,\ell(Q)
\left( \frac{ 1 }{|Q| } \int_{Q} \absval{\nabla f}^{p}\right)^{1/p},
\end{equation}
where $Q$ is any cube in $\mathbb{R}^n$
(from now on we will only consider cubes with sides parallell to the coordinate axes) or euclidean ball, $p$ is a parameter $p>1$ and $p^*$ is the so called Sobolev exponent given by the condition $\frac1p-\frac1n=\frac1{p^{*}}$, defined for $p<n$.

The classical way of proving Poincaré-Sobolev inequalities  is by exploiting the interplay between averaged oscillations and fractional integral operators. Recall that $\I_{1}$ is the standard notation for  the fractional
integral or Riesz potential of order $1$ defined by
\begin{equation}\label{eq:def-fractional}
\I_1(f)(x)=\int_{\mathbb{R}^{n} }\frac{f(y)}{|x-y|^{n-1}}\ dy.
\end{equation}

The key estimate is the following pointwise estimate, allowing the control of the oscillation by the fractional integral

\begin{equation}\label{eq:pointwise1}
|f(x)-f_{Q}| \le c_n\,\I_{1}( |\nabla f| \chi_{ Q } )(x).
\end{equation}
It is an interesting fact that the previous estimate is equivalent to the following averaged result, 

\begin{equation} \label{eq:PI1-1}
\frac{1}{|Q|}\int_{Q} \absval{f-f_{Q}}\le c_n\,
\frac{ \ell(Q) }{|Q| } \int_{Q} \absval{\nabla f},
\end{equation}
as shown first in \cite{FLW} (see an extension in \cite{PR-Poincare}, Theorem 11.3).  Then, estimate \eqref{eq:PSpp*} will follow from the appropriate $L^p\to L^q$ boundedness properties of the classical fractional integral operators $I_{\alpha}$.
We remit to \cite{AH} for an extensive overview of this material. 
The key ideas go back to Sobolev about 100 years ago. 

In the late 90's, in \cite{FPW98} a new method to derive these estimates which avoids the use of potential operators was introduced. This method was considered very recently by the authors in \cite{CMPR-JD}, using ideas from \cite{PR-Poincare}, in the context of rectangles, or more generally in the context of muliparameter analysis. We include below Theorem \ref{thm:AutomejorastrongccR} as a  sample of this kind of results which do not use any pointwise estimate as \eqref{eq:pointwise1} requiring the use of fractional operators. This result provides an inequality holding for the family $\ccR$, defined by rectangles of the form $R=I_1\times I_2$ where $I_1\subset \mathbb{R}^{n_1}$ and $I_2 \subset \mathbb{R}^{n_2}$ are cubes with sides parallel to the coordinate axes and $n:=n_1+n_2$. 
\begin{theorem}\label{thm:AutomejorastrongccR}  \, 
Let $w\in A_{1,\ccR}$ in $\mathbb{R}^{n}$ and let $p \geq 1.$
Let also 
$$
\frac1p-\frac1{p^*}= \frac1{n}\frac1{(1+\log [w]_{A_{1,\ccR}} )}.
$$
Then, there exists a constant $c=c(n,p)>0$ such that for every Lipschitz function $f$ and any $R=I_1\times I_2\in \ccR$, 
\begin{equation}\label{eq:PSccR}
\left\|f-f_R\right \|_{L^{p^*}(R, \frac{wdx}{w(R)})}
\leq c\,
[w]_{A_{p,\ccR}}^{\frac{1}{p}} \left (a_1(R)+a_2(R)
 \right ),
\end{equation}
where 
\begin{equation*}
a_1(R)= \ell(I_1) 
\left\|\nabla_1 f\right \|_{L^{p}(R, \frac{wdx}{w(R)})}
\quad \text{ and }\quad 
a_2(R)= \ell(I_2) 
\left\|\nabla_2 f\right \|_{L^{p}(R, \frac{wdx}{w(R)})}.
\end{equation*}
\end{theorem}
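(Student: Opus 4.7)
The plan is to bypass the pointwise estimate \eqref{eq:pointwise1} and the use of fractional integrals entirely, and instead deploy the self-improving scheme of Franchi--P\'erez--Wheeden \cite{FPW98}, adapted to the biparametric/multiparameter setting in \cite{CMPR-JD} following the ideas of \cite{PR-Poincare}. The machinery requires two structural inputs: a base weighted Poincar\'e inequality at exponent $p$, and a discrete Sobolev (summation) property for the right-hand side functional $a(R):=a_1(R)+a_2(R)$; both are then fed into a truncation/iteration argument controlled by a sharp reverse H\"older inequality.

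The first step is to establish the base estimate
\begin{equation*}
\left(\frac{1}{w(R)}\int_R |f-f_R|^p\,w\,dx\right)^{1/p} \leq C\,[w]_{A_{p,\ccR}}^{1/p}\bigl(a_1(R)+a_2(R)\bigr)
\end{equation*}
for every $R=I_1\times I_2\in\ccR$. The natural route is to split the oscillation along the two factors, $|f(x_1,x_2)-f_R|\leq |f(x_1,x_2)-(f)_{I_1}(x_2)|+|(f)_{I_1}(x_2)-f_R|$, apply the classical one-parameter Poincar\'e inequality in each direction, and then pass from Lebesgue to weighted norms via the weighted $L^p$ boundedness of the strong maximal function $M_\ccR$, whose operator norm is controlled by $[w]_{A_{p,\ccR}}^{1/p}$. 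This is precisely what accounts for the factor $[w]_{A_{p,\ccR}}^{1/p}$ on the right-hand side of \eqref{eq:PSccR}.

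The second step is the self-improvement. One first verifies the summation condition: for any pairwise disjoint subrectangles $\{R_i\}\subset R$,
\begin{equation*}
\sum_i a(R_i)^p\, w(R_i) \leq a(R)^p\, w(R),
\end{equation*}
which is essentially immediate because $\ell(I_k^{(i)})\leq \ell(I_k)$ and the $R_i$ are pairwise disjoint inside $R$. The other crucial ingredient is the sharp biparametric reverse H\"older inequality for $w\in A_{1,\ccR}$,
\begin{equation*}
\Bigl(\avgint_R w^{1+\sigma}\Bigr)^{1/(1+\sigma)} \leq 2\avgint_R w,\qquad \sigma\asymp \tfrac{1}{1+\log[w]_{A_{1,\ccR}}}.
\end{equation*}
Feeding the base inequality, the summation property and this reverse H\"older into the truncation/iteration scheme of \cite{PR-Poincare,CMPR-JD} upgrades the $L^p$ base estimate to the $L^{p^*}$ conclusion with precisely the exponent $1/p-1/p^*=1/(n(1+\log[w]_{A_{1,\ccR}}))$ stated in the theorem.

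The main obstacle I expect lies in the biparametric reverse H\"older inequality with the correct quantitative dependence on $[w]_{A_{1,\ccR}}$: the one-parameter version is classical and quantitatively sharp, but in the rectangular setting the standard Calder\'on--Zygmund-type proof is not directly available, and one must instead iterate the one-parameter reverse H\"older in each variable while tracking constants carefully in order to reach the precise scale $\sigma\sim 1/(1+\log[w]_{A_{1,\ccR}})$. Once this is secured, the rest of the argument---summation plus truncation---transfers from the one-parameter setting to the rectangular one essentially unchanged and yields the stated exponent $p^*$.
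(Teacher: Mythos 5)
First, a point of reference: the paper does not actually prove Theorem \ref{thm:AutomejorastrongccR}; it is quoted as a special case of the self-improving machinery of \cite{CMPR-JD} (following \cite{FPW98,PR-Poincare}). Your overall strategy --- start from a biparametric $(1,1)$ oscillation estimate, verify a summation condition for $a=a_1+a_2$, and run the truncation/iteration scheme with a sharp reverse H\"older input, obtained by iterating the one-parameter reverse H\"older on the slices $w^x,w^y$ via $[w^x]_{A_1}\le [w]_{A_{1,\ccR}}$ --- is indeed the route of the cited source, so the plan is the right one in spirit.

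There are, however, two concrete gaps. (1) The factor $[w]_{A_{p,\ccR}}^{1/p}$ cannot be produced by ``the weighted $L^p$ boundedness of $M_{\ccR}$ with operator norm $[w]_{A_{p,\ccR}}^{1/p}$'': the strong $L^p(w)$ norm of $M_{\ccR}$ is $\lesssim [w]_{A_{p,\ccR}}^{2/(p-1)}$ (the paper itself records this in \eqref{eq:Buckley}); only weak-type norms scale like $[w]^{1/p}$, and for $p=1$ (which the theorem allows) there is no strong bound at all. The correct, and much more elementary, mechanism is H\"older's inequality together with the definition \eqref{eq:Ap-ccR} applied on each fixed rectangle:
\begin{equation*}
\avgint_R|\nabla_k f|\,dx\ \le\ [w]_{A_{p,\ccR}}^{1/p}\Big(\frac{1}{w(R)}\int_R|\nabla_k f|^p\,w\,dx\Big)^{1/p},
\end{equation*}
which converts the unweighted functional in the starting $(1,1)$ inequality into the weighted one, covers $p=1$, and involves no maximal function. (2) The summation condition you propose, $\sum_i a(R_i)^p w(R_i)\le a(R)^p w(R)$, is the plain $D_p$ condition, and by itself it only reproduces the exponent $p$; to gain $\frac1p-\frac1{p^*}=\frac1n\frac1{1+\log[w]_{A_{1,\ccR}}}$ one needs the stronger $SD_p^s$-type condition carrying a geometric smallness factor such as $\big(\sum_i|R_i|/|R|\big)^{p/s}$. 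Verifying this for $a_1+a_2$ is not ``essentially immediate'': a subrectangle $R_i=I_1^{(i)}\times I_2^{(i)}$ of $R$ may shrink in one factor only, so $\ell(I_k^{(i)})/\ell(I_k)$ is not in general controlled by $(|R_i|/|R|)^{1/n}$, and one must also replace the cube Calder\'on--Zygmund decomposition by a covering argument adapted to the basis $\ccR$, which has no weak $(1,1)$ maximal estimate. These two points, rather than the reverse H\"older inequality you single out, are where the genuine multiparameter difficulty lies.
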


We are using here the following notation: for a given function  $f:U\to\mathbb{R}$ defined on the open set $U\subset \mathbb{R}^{n_1} \times \mathbb{R}^{n_2}$, we will write $f(x)=f(x_1,x_2)$ where $x_1$ stands for the first $n_1$ variables and $x_2$ stands for the remaining $n_2$ variables. 
$\nabla_1f$ will denote the partial gradient of $f$ containing the $x_1$-derivatives and 
similarly  $\nabla_2f$ will denote the partial gradient of $f$ containing the $x_2$-derivatives.

To the extent of our knowledge, these type of results were proved for the first time in \cite{ShiTor}, but this theorem is a  particular case of many others results that can be found in \cite{CMPR-JD}. The latter approach provides much more precise inequalities.

The present  work is an outgrowth of \cite{CMPR-JD}. Indeed, it is quite surprising that a result like \eqref{eq:PSccR} cannot be derived using a version of the pointwise estimate \eqref{eq:pointwise1} based on a multiparameter potential operator version of  $\I_1$ from \eqref{eq:def-fractional}. There is though a multiparametric  counterpart of the fractional integral operator introduced in \cite{CUMP-2004} which leads to a special pointwise inequality and hence to a non-standard Poincaré inequality \eqref{eq:pointwise-oscillation-fractional} and 
\eqref{eq:(1,1)-Poincare-Product}. The  main point of this paper is to improve the $(1,1)$ non-standard Poincaré 
inequality \eqref{eq:(1,1)-Poincare-Product} to the $(p,p)$ case. We stress the fact that unfortunately we cannot use the method in \cite{CMPR-JD} since 
it is not clear how to handle the non-standard oscillation $\pi_R(f)$ appearing in \eqref{eq:pointwise-oscillation-fractional}.

\subsection{Biparameter  Poincar\'e inequalities} \label{subsec:MixedGradient}
We include in this section the main contributions of this article, concerning results on  $(p,p)$-Poincar\'e type inequalities obtained by means of classical arguments involving the interplay between fractional integrals and maximal functions. This approach is known for the case of cubes, but requires some extra work to adapt it to the bi-parametric geometry of rectangles given by products of cubes.
Recall that $I_1$ will always denote a cube in $\mathbb{R}^{n_1}$ while $I_2$ will be a cube in $\mathbb{R}^{n_2}$. Let $\delta_1=\ell(I_1)$ and $\delta_2=\ell(I_2).$

Let us consider the multiparametric fractional operator 
\begin{equation}\label{multiparametric fractional operator}
Tf(x,y)=\int_{\mathbb{R}^{n_1}}\int_{\mathbb{R}^{n_2}} \frac{f(\bar{x},\bar{y})}{|x-\bar{x}|^{n_1-1}|y-\bar{y}|^{n_2-1}}\,d\bar{x}\,d\bar{y}.
\end{equation}
Consider the following notations:
\begin{equation*}
f_{I_1}^y=\frac{1}{|I_1|}\int_{I_1} f(x,y)\,dx
\qquad , \qquad 
f_{I_2}^x=\frac{1}{|I_2|}\int_{I_2} f(x,y)\,dy
\end{equation*}
and 
\begin{equation*}
f_{I_1\times I_2}= \frac{1}{|I_1||I_2|}\int_{I_1\times I_2}f(x,y)\,dydx.
\end{equation*}
We denote $\I_1^{(1)}$ the $n_1-$dimensional fractional integral of order 1 and  $\I_1^{(2)}$ the $n_2-$dimensional fractional integral of order 1. That is, for a function $f:\mathbb{R}^{n_1}\times \mathbb{R}^{n_2}\to \mathbb{R}$, we have that
\begin{equation}\label{eq:def-fractional-1}
\I_1^{(1)}(f)(x,y)=\int_{\mathbb{R}^{n_1} }\frac{f^y(\bar{x})}{|x-\bar{x}|^{n_1-1}}\ d\bar{x} 
\end{equation}
and 
\begin{equation}\label{eq:def-fractional-2}
\I_1^{(2)}(f)(x,y)=\int_{\mathbb{R}^{n_2}}\frac{f^x(\bar{y})}{|y-\bar{y}|^{n_2-1}}\ d\bar{y},
\end{equation}
where $f^x(y)=f(x,y)$ denotes the slice of the function $f$ for a fixed $x\in I_1$ (similarly for $f^y$). 
Then $T$ can be expressed as the composition 
$$T=\I_1^{(1)}\circ \I_1^{(2)}.$$

The main relevance for us to consider this multilinear operator is due 
to its connection with a special non-standard  oscillation introduced, as far as we know, in  
\cite{CUMP-2004} with respect to non-constant average defined by the quantity
\begin{equation*}
 \pi_{I_1\times I_2}(f):=f_{I_1}^y+f_{I_2}^x-f_{I_1\times I_2}
 \end{equation*}
which will play the role of the average over the cube as in the one parameter case. 
To be more precise we have in the following lemma a substitute of the model example \eqref{eq:pointwise1} in the  setting of product spaces where 
the usual gradient is replaced by the following mixed derivatives matrix
$\displaystyle f \mapsto \nabla_x \nabla_y f,$
where 
\begin{equation*}
\displaystyle\nabla_x \nabla_y f=\left( \frac{\partial^2
f}{\partial x_i\, \partial y_j}\right)_{i,j}
 \quad \text{ and} \quad 
\displaystyle|\nabla_x \nabla_y f| = \bigg(\sum_{i,j} \Big|
\frac{\partial^2 f}{\partial x_i\, \partial y_j} \Big|^2
\bigg)^\frac12. 
\end{equation*}

We don't know where this object was first introduced but it was considered by J. M. Wilson in \cite{Wilson-1991-Indiana,Wilson-1995-Rocky} when studying spectral type properties of the two-parameters Schrödinger operators $\Delta_{n_1}\circ \Delta_{n_2}-V$, where $V\in L_{loc}^1(\mathbb{R}^{n_1} \times \mathbb{R}^{n_2})$ and where $\Delta_{n_i}$ is the Laplace operator in $\mathbb{R}^{n_i}$ where the multiparameter Harmonic Analysis theory played a central role like in the present work.

We will rely on the following pointwise inequality, analogous to \eqref{eq:pointwise1}

\begin{lemma}\cite[Proposition 6.1]{CUMP-2004} 
\label{lem:pointwise-oscillation-fractional} Let  $R\in \ccR$ of the form $R=I_1\times I_2,$ as before. Then we have the following pointwise estimate for any   $f\in C^2(R)$
\begin{equation}\label{eq:pointwise-oscillation-fractional} 
|f(x,y)-\pi_{R}(f)(x,y)| \leq T(|\nabla_x \nabla_y f \chi_{R}|)(x,y),
\end{equation}
for every $(x,y)\in I_1\times I_2$.
\end{lemma}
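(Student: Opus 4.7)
The plan is to mimic the classical one-variable proof of the pointwise inequality \eqref{eq:pointwise1} but applied in each factor in turn, so that the mixed second derivative appears naturally and separate polar-coordinate computations produce the tensor-product kernel defining $T$.

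First I would rewrite the oscillation as a single double average:
\[
f(x,y)-\pi_R(f)(x,y) = \frac{1}{|I_1||I_2|}\int_{I_1}\int_{I_2}\bigl[f(x,y)-f(\bar x,y)-f(x,\bar y)+f(\bar x,\bar y)\bigr]\,d\bar y\,d\bar x,
\]
which holds because averaging the four terms on the right over $(\bar x,\bar y)\in I_1\times I_2$ produces exactly $f(x,y)$, $f_{I_1}^y$, $f_{I_2}^x$ and $f_{I_1\times I_2}$. For fixed $(\bar x,\bar y)$, I would apply the fundamental theorem of calculus twice to the rectangular difference $D$ in the integrand: first in $x$ along the segment from $\bar x$ to $x$ in direction $\sigma=(x-\bar x)/|x-\bar x|$, and then in $y$ along the segment from $\bar y$ to $y$ in direction $\tau=(y-\bar y)/|y-\bar y|$. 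This yields
\[
D = \int_0^{|x-\bar x|}\!\int_0^{|y-\bar y|} \sum_{i,j}\sigma_i\,\tau_j\,\frac{\partial^2 f}{\partial x_i\partial y_j}(\bar x+t\sigma,\bar y+s\tau)\,ds\,dt,
\]
and a Cauchy--Schwarz estimate on the bilinear form in $(\sigma,\tau)$ bounds the integrand by $|\nabla_x\nabla_y f(\bar x+t\sigma,\bar y+s\tau)|$.

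To convert the resulting quadruple integral into $T(|\nabla_x\nabla_y f|\chi_R)(x,y)$, since the kernel of $T$ factorises, I would pass to polar coordinates independently in each variable, writing $\bar x = x-\rho\sigma$ with $d\bar x=\rho^{n_1-1}d\rho\,d\sigma$ and $\bar y = y-r\tau$ with $d\bar y=r^{n_2-1}dr\,d\tau$, and then substitute $\tilde t=\rho-t$ and $\tilde s=r-s$ so that the argument of $\nabla_x\nabla_y f$ becomes $(x-\tilde t\sigma,y-\tilde s\tau)$. After swapping the order of integration so that $\tilde t\le\rho\le L_1(\sigma)$ and $\tilde s\le r\le L_2(\tau)$, the radial integrals produce factors bounded by $\delta_1^{n_1}$ and $\delta_2^{n_2}$ (using $L_i\le\sqrt{n_i}\,\delta_i$), which absorb the $1/(|I_1||I_2|)$ prefactor from the averaging. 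Multiplying and dividing by $\tilde t^{n_1-1}\tilde s^{n_2-1}$ and reverting to Cartesian coordinates $\bar u = x-\tilde t\sigma\in I_1$ and $\bar v = y-\tilde s\tau\in I_2$ recovers the kernel $|\bar u-x|^{-(n_1-1)}|\bar v-y|^{-(n_2-1)}$ and the full operator $T$, up to a dimensional constant depending only on $n_1$ and $n_2$.

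The main obstacle I foresee is the bookkeeping in this last polar-coordinate step: the radial limits $L_i(\cdot)$ depend on the base point and direction, and the two radial swaps must be executed simultaneously while keeping the characteristic function $\chi_R$ intact. A cleaner alternative I would try in parallel is to apply \eqref{eq:pointwise1} twice: set $g(x,y):=f(x,y)-f_{I_1}^y$ and check that $g_{I_2}^x=f_{I_2}^x-f_{I_1\times I_2}$, so that $g(x,y)-g_{I_2}^x = f(x,y)-\pi_R(f)(x,y)$. A first application of \eqref{eq:pointwise1} in the $y$-variable to $g(x,\cdot)$ on $I_2$ controls this difference by $\I_1^{(2)}(|\nabla_y g|(x,\cdot)\chi_{I_2})(y)$, and since $\nabla_y g = \nabla_y f-(\nabla_y f)_{I_1}^y$ has vanishing $I_1$-mean in $x$, a second componentwise application of \eqref{eq:pointwise1} controls $|\nabla_y g|(x,y)$ by $\I_1^{(1)}(|\nabla_x\nabla_y f|(\cdot,y)\chi_{I_1})(x)$; the composition $\I_1^{(2)}\circ\I_1^{(1)}$ is precisely $T$.
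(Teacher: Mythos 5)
The paper does not actually prove this lemma: it is imported verbatim from \cite{CUMP-2004} (Proposition 6.1), so there is no in-house argument to compare against. Both routes you propose are correct and, as far as I can tell, the second one is essentially the argument in the cited source. Your first route is sound: the double-average identity for $f-\pi_R(f)$ is verified by direct computation, the rectangular difference is handled by two applications of the fundamental theorem of calculus, $|\sigma^{T}A\tau|\le|A|_{HS}$ gives the bound by $|\nabla_x\nabla_yf|$, and the two polar-coordinate reductions decouple completely (the kernel, the domain $I_1\times I_2$ and the radial limits $L_1(\sigma)$, $L_2(\tau)$ all factorize, and convexity of each $I_i$ keeps the points $x-\tilde t\sigma$, $y-\tilde s\tau$ inside $I_1$, $I_2$), so the ``simultaneous'' bookkeeping you worry about is just Tonelli for a nonnegative integrand. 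Your second route is cleaner and worth writing up as the actual proof: the identity $g(x,y)-g_{I_2}^x=f(x,y)-\pi_R(f)(x,y)$ with $g=f-f_{I_1}^y$ is correct, and the only point you should make explicit is that passing from the componentwise bounds $|\partial_{y_j}g(x,y)|\le c_{n_1}\I_1^{(1)}(|\nabla_x\partial_{y_j}f(\cdot,y)|\chi_{I_1})(x)$ to $|\nabla_yg|\le c_{n_1}\I_1^{(1)}(|\nabla_x\nabla_yf|\chi_{I_1})$ uses Minkowski's integral inequality for the $\ell^2$ norm; also note that $\I_1^{(2)}\circ\I_1^{(1)}=\I_1^{(1)}\circ\I_1^{(2)}=T$ by Fubini since the kernels act on independent variables. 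One discrepancy with the statement as printed: either argument produces a dimensional constant $c_{n_1,n_2}$ on the right-hand side (as in \eqref{eq:pointwise1}), whereas the lemma is stated with constant $1$; this is an omission in the statement and is harmless, since every subsequent use of the lemma is through $\lesssim$.
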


 As a consequence of this we have 
\begin{equation} \label{eq:(1,1)-Poincare-Product}
\int_{R} |f-\pi_{R}(f)|\,dx\,dy \lesssim \ell(I_1) \ell(I_2) 
	\int_{R} |\nabla_x \nabla_y f|\,dx\,dy,
\end{equation}
although there is a more stronger and hence more useful version in \eqref{eq:(1,1)-Poicare-ProductA1}. A natural question arising here is whether  these two estimates are equivalent or not, as it happens in the classical setting of cubes mentioned before.

Here and in the reminder of this article we will use the following notation for (weighted) local $L^p$ norms over a rectangle $R$:
\begin{equation*}
\left\|f\right \|_{L^p\left (R,w\right )}:=
 \left (\int_R |f |^p w dx\right )^{\frac{1}{p}}.
\end{equation*}

Similarly, we will use the standard notation for the weak $(r,\infty)$ (quasi)-norm: for any $0<r<\infty$, measurable $R$ and weight $w$, we define
\begin{equation*}
\| f \|_{L^{r,\infty}\big(R,w\big)}:= \sup_{t>0}  t \, \left(w(\{x\in R:  |f(x)|>t\}) \right )^{1/r}.
\end{equation*}
As usual, when dealing with the constant weight $w\equiv 1$, we will simply omit to mention it.

In our context,  the natural scenario for the study of weighted inequalities is the Lebesgue space $L^p(w)$, where $w$ is  a weight in the class $A_{p,\ccR}$ associated to the family $\ccR$.  Let us first introduce its obvious definition adapted to the geometry of such basis of rectangles. For  a weight $w$ in $\mathbb{R}^{n_1}\times\mathbb{R}^{n_2}$, $n=n_1+n_2$, we will say that $w \in A_{p,\ccR}$ if
\begin{equation}\label{eq:Ap-ccR}
[w]_{A_{p, \ccR}}:=\sup_{R \in \ccR} 
\left( \frac{1}{|R|}\int_R w(x)\,dx\right) \left(\frac{1}{|R|}\int_R w(x)^{-\frac{1}{p-1}}\,dx \right)^{p-1}< \infty, 
\end{equation}
and in the case $p=1$, for a finite constant $c$
\begin{equation}\label{eq:A1-ccR}
\frac{1}{|R|}\int_R w(x)\,dx \leq c\, \inf_R w    \qquad R \in \ccR
\end{equation}
and the smallest of the constants $c$  is denoted by \,$[w]_{A_{1, \ccR}}$.

Our first result is about a quantitative weighted Poincar\'e inequality for the $A_{p, \ccR}$ class.

\begin{theorem}\label{thm:debil multiparametrico}  Let $w$ be a weight in $A_{p,\ccR}$ for $p>1$. Define $R=I_1\times I_2$. Then the  following local weak type Poincar\'e inequality holds for any   $f\in C^2(R),$

\begin{equation*}
\|f-\pi_{R}(f)\|_{L^{p,\infty}(R,w)}
\lesssim [w]_{A_{p, \ccR}}^{\frac{1}{p}+ \frac{1}{p-1}} \ell(I_1)\ell(I_2)
\,\|\nabla_x \nabla_y f\|_{L^{p}(R,w)}.
%
\end{equation*}
\end{theorem}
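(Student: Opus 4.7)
The approach combines the pointwise oscillation bound from Lemma \ref{lem:pointwise-oscillation-fractional} with a reduction of the biparametric fractional integral $T$ to an iterated maximal operator, followed by weighted weak-type estimates adapted to the class $A_{p,\ccR}$.

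\emph{Step 1: Reduction to an iterated maximal function.} By \eqref{eq:pointwise-oscillation-fractional}, with $g:=|\nabla_x\nabla_y f|\chi_R$, it suffices to prove
\begin{equation*}
\|Tg\|_{L^{p,\infty}(R,w)} \lesssim [w]_{A_{p,\ccR}}^{1/p+1/(p-1)}\,\ell(I_1)\ell(I_2)\,\|g\|_{L^p(R,w)}.
\end{equation*}
For fixed $\bar x\in I_1$, I invoke the standard one-parameter pointwise estimate
\begin{equation*}
\int_{I_2}\frac{h(\bar y)}{|y-\bar y|^{n_2-1}}\,d\bar y \;\lesssim\; \ell(I_2)\,M h(y),\qquad y\in I_2,
\end{equation*}
valid for $h$ supported in $I_2$, proved by a dyadic annular decomposition of $I_2$ around $y$. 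Applying this to $h(\bar y)=g(\bar x,\bar y)$ and then repeating the argument in the $\bar x$-variable against $M^{(2)}g(\cdot,y)$ (which is supported in $I_1$) yields the pointwise bound
\begin{equation*}
Tg(x,y) \;\lesssim\; \ell(I_1)\ell(I_2)\,M^{(1)}M^{(2)} g(x,y),\qquad (x,y)\in R,
\end{equation*}
where $M^{(i)}$ denotes the Hardy--Littlewood maximal operator acting on the $i$-th cluster of variables.

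\emph{Step 2: Weighted bound for $M^{(1)}M^{(2)}$.} By Lebesgue differentiation on rectangles shrinking in the unused variable, the hypothesis $w\in A_{p,\ccR}$ forces $[w(\cdot,y)]_{A_p(\mathbb{R}^{n_1})}\le[w]_{A_{p,\ccR}}$ for a.e.\ $y$, and symmetrically in the other variable. I then compose a strong-type bound with a weak-type bound, applied in the correct order. First, for each fixed $x$, Buckley's sharp strong-type inequality in $\mathbb{R}^{n_2}$ gives
\begin{equation*}
\int_{\mathbb{R}^{n_2}}|M^{(2)}g(x,y)|^p w(x,y)\,dy \;\lesssim\; [w]_{A_{p,\ccR}}^{p/(p-1)}\int_{\mathbb{R}^{n_2}}|g(x,y)|^p w(x,y)\,dy,
\end{equation*}
which I integrate in $x$ to get $\|M^{(2)}g\|_{L^p(w)}\lesssim[w]_{A_{p,\ccR}}^{1/(p-1)}\|g\|_{L^p(w)}$. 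Next, for each fixed $y$, the sharp weak $(p,p)$ bound for the one-parameter maximal operator in $\mathbb{R}^{n_1}$ with weight $w(\cdot,y)$ yields
\begin{equation*}
t^p\,w\big(\{x:M^{(1)}(M^{(2)}g)(x,y)>t\}\big) \;\lesssim\; [w]_{A_{p,\ccR}}\int_{\mathbb{R}^{n_1}}|M^{(2)}g(x,y)|^p w(x,y)\,dx.
\end{equation*}
Integrating in $y$ via Fubini, combining the two bounds, and taking the $p$-th root produces the exponent $1/p+1/(p-1)$ in the claimed weak $(p,p)$ estimate for $M^{(1)}M^{(2)}$. Together with Step 1 this closes the argument.

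\emph{Main obstacle.} The subtle point is the interplay between strong- and weak-type estimates across the two parameters: two weak $(p,p)$ bounds cannot be iterated directly, and choosing the wrong order destroys the Fubini argument, since the outer weak inequality must be applied against an honest $L^p(w)$ quantity. Buckley's strong-type bound must therefore be used in the inner variable while the sharp weak $(p,p)$ constant is reserved for the outer variable; verifying that the two sharp exponents $[w]_{A_p}^{1/(p-1)}$ and $[w]_{A_p}^{1/p}$ are compatible with the slice characterization of $A_{p,\ccR}$ is precisely what produces the stated exponent.
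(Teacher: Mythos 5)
Your proposal is correct and follows essentially the same route as the paper: pointwise domination of $T$ by an iterated maximal operator via Lemma \ref{lem:max y frac}, then Buckley's sharp strong $(p,p)$ bound on the inner slice composed with the sharp weak $(p,p)$ bound on the outer slice, yielding the exponent $\frac1p+\frac1{p-1}$. The only (immaterial) difference is that you compose the maximal operators in the order $M^{(1)}\circ M^{(2)}$ while the paper uses $M^{n_2}\circ M^{n_1}$; your remark that the strong bound must be applied to the inner operator is exactly the point the paper's proof also relies on.
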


Note that we only obtain here the weak norm, since the maximal operator is not bounded in $L^1$. This is also reflected in the exponent in the $A_p$ constant, as a sum of $\frac{1}{p}$ from the weak bound used in the first step and $\frac{1}{p-1}$ from the strong bound that we are forced to use as a last step.

We remark here that in the classical setting of cubes, a sort of ``weak implies strong" argument (often called the ``truncation" method) can be applied to obtain the $(p,p)$-Poincar\'e inequality from the weak estimate. Here, since we are not in position to claim that such an argument works for mixed 
gradients $\nabla_x \nabla_y$, we can only propose the following result as a conjecture

\begin{conjecture}Let $w$ be a weight in $A_{p,\ccR}$ for $p>1$. Then for any rectangle of the form $R=I_1\times I_2$ and any $f\in C^2(R)$, we have
\begin{equation*} 
\|f-\pi_{R}(f)\|_{L^{p}(R,w)} \lesssim [w]_{A_{p, \ccR}}^{\frac{1}{p-1} +\frac1p} \ell(I_1) \ell(I_2) 
\,\|\nabla_x \nabla_y f\|_{L^{p}(R,w)}. \\
\end{equation*}
\end{conjecture}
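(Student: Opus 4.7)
First I would record what the most direct pointwise attack yields, because the content of the conjecture is really a sharpening rather than an existence statement. Chaining Lemma~\ref{lem:pointwise-oscillation-fractional} with the pointwise domination
$$T(g)(x,y)\lesssim \ell(I_1)\,\ell(I_2)\,M_s(g)(x,y)\qquad\text{for }g\text{ supported in }R,$$
(obtained by iterating the classical one-parameter bound $\I_1(h\chi_Q)\lesssim\ell(Q)\,M(h\chi_Q)$ on $Q$ first in the $\bar{y}$ variable and then in the $\bar{x}$ variable, where $M_s=M^{(1)}\circ M^{(2)}$ is the strong maximal function), and then applying Buckley's sharp $(p,p)$-bound $\|M\|_{L^p(v)\to L^p(v)}\lesssim [v]_{A_p}^{1/(p-1)}$ in each of the two parameters, one does get a strong estimate, but with the worse exponent $\tfrac{2}{p-1}$. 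The whole point of the conjecture is therefore to bring this exponent down to the one produced by the weak bound of Theorem~\ref{thm:debil multiparametrico}.

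The natural way to accomplish such an improvement is the classical truncation (``weak-implies-strong'') method: set $\phi_k(t)=\max(\min(t-2^k,2^k),0)$, $f_k=\phi_k(f)$, apply Theorem~\ref{thm:debil multiparametrico} to each $f_k$ and sum in $k$. In the one-parameter case the argument closes because $|\nabla f_k|\le|\nabla f|\chi_{\{2^k\le f<2^{k+1}\}}$ pointwise. In our biparameter setting, however, the Leibniz/chain rule forces
$$|\nabla_x\nabla_y(\phi(f))|\le |\phi''(f)|\,|\nabla_x f|\,|\nabla_y f|+|\phi'(f)|\,|\nabla_x\nabla_y f|,$$
so $|\nabla_x\nabla_y f_k|$ is \emph{not} controlled by $|\nabla_x\nabla_y f|\chi_{\{2^k\le f<2^{k+1}\}}$: the cross term $|\nabla_x f|\,|\nabla_y f|$ survives even on the level set of $f$. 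This is exactly the obstruction flagged by the authors and, in my view, the true core of the conjecture.

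Two alternative routes seem worth pursuing. The first is a \emph{two-step} truncation adapted to the product structure: truncate $f$ first in the $x$-variable, then in the $y$-variable, in the hope that the bilinear cross term generated at each stage carries the same symmetry as the non-standard average $\pi_R$ and can be reabsorbed into $\pi_R(f_k)$ on the left-hand side. The second, to my mind more robust, is a quantitative Rubio de Francia extrapolation applied directly to the weak estimate of Theorem~\ref{thm:debil multiparametrico} in the biparameter $A_{p,\ccR}$ scale. The main obstacle of this second route is to preserve the exponent: sharp extrapolation typically introduces an extra factor of $\max\{1,(p_0-1)/(p-1)\}$ when moving from the base exponent $p_0$ to the target $p$, so either an off-diagonal version of the extrapolation theorem or a careful limiting argument as $p_0\to p$ would be needed in order to recover precisely $\tfrac{1}{p}+\tfrac{1}{p-1}$ rather than the weaker $\tfrac{2}{p-1}$ that one already has for free.
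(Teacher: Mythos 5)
This statement is stated in the paper as a \emph{Conjecture}: the authors explicitly say they are ``not in position to claim'' that the weak-implies-strong truncation argument works for the mixed gradient $\nabla_x\nabla_y$, and they offer no proof. So there is no proof in the paper to compare yours against, and --- to be clear --- your proposal does not supply one either. What you have written is an accurate situational analysis: the pointwise chain through Lemma~\ref{lem:pointwise-oscillation-fractional}, Lemma~\ref{lem:max y frac} and Buckley's bound giving the exponent $\tfrac{2}{p-1}$ is exactly Theorem~\ref{thm:fuerte multiparametrico - p>1}, and the obstruction you identify --- that $\nabla_x\nabla_y(\phi\circ f)$ produces the cross term $\phi''(f)\,\nabla_x f\otimes\nabla_y f$, which is not supported on a level set of $f$ and is not controlled by $|\nabla_x\nabla_y f|$ --- is precisely the obstruction the authors flag. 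Your diagnosis of where the difficulty lies is correct and matches theirs.

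The gap is simply that neither of your two proposed routes is carried out, and there is reason to doubt the second one as stated. The Rubio de Francia extrapolation route is in fact what the authors execute in Section~\ref{sec:extrapol} (Theorem~\ref{thm: strong-extrapolation}), extrapolating not from the weak $(p,p)$ estimate of Theorem~\ref{thm:debil multiparametrico} but from the weighted $(1,1)$ inequality of Proposition~\ref{pro:specialPoin11}; the cost of the dual maximal operator bound $\|M_{\ccR}'\|_{L^{p'}(w)}\lesssim [w]_{A_{p,\ccR}}^{2}$ yields only the exponent $\min\{4,\tfrac{2}{p-1}\}$ of Theorem~\ref{thm: strongPI}, which is still strictly worse than the conjectured $\tfrac1p+\tfrac1{p-1}$ for every $p$. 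Extrapolating from the weak estimate instead would additionally require converting weak to strong at the target exponent, which returns you to the truncation obstruction. So the conjecture remains open after your proposal, exactly as it does in the paper; if you intend this as a proof you would need to actually close one of the two routes, most plausibly by finding a product-adapted truncation for which the bilinear cross term is absorbed by the non-standard oscillation $\pi_R$.
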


 Another natural question is to find a Sobolev-Poincaré type inequality like \eqref{eq:PSpp*}. Naturally, $T$, as defined in \eqref{multiparametric fractional operator},  should play the role of $\I_1$ in the one-parameter case. However we failed doing this. Further, the method developed in \cite{CMPR-JD}, which avoids the use of fractional operators, cannot be applied here since the initial Poincaré \eqref{eq:(1,1)-Poincare-Product}  is not, as already mentioned,  the standard one and hence a new theory must be developed.

As we mentioned above the ideas presented in the proof of Theorem \ref{thm:debil multiparametrico} can not be extended to the case $p=1$. We can, however, prove the following result.
\begin{proposition} \label{pro:specialPoin11}
Let $w$ be a weight in $A_{1,\ccR}$. Then the following $(1,1)$ Poincar\'e inequality holds uniformly in $R=I_1 \times I_2\in\ccR$ for $f\in C^2(R)$. 
\begin{equation} \label{eq:(1,1)-Poicare-ProductA1}
\int_{R} |f-\pi_{R}(f)|\,w\,dx\,dy \lesssim \ell(I_1) \ell(I_2) [w]_{A_{1, \ccR}}^{2}
	\int_{R} |\nabla_x \nabla_y f|\,w\,dx\,dy.
\end{equation}
\end{proposition}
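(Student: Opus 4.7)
The plan is to combine the pointwise estimate from Lemma~\ref{lem:pointwise-oscillation-fractional} with Fubini's theorem and the one-parameter interplay between Riesz potentials and maximal functions, applied iteratively in each of the two groups of variables. Starting from $|f-\pi_R(f)|\leq T(|\nabla_x\nabla_y f|\chi_R)$, multiplying by $w$ and integrating over $R$, the symmetry of the kernel of $T$ under the swap $(x,y)\leftrightarrow (\bar x,\bar y)$ allows Fubini to transfer the weight to the ``adjoint'' side. In this way the desired estimate is reduced to the pointwise bound
\begin{equation*}
T(w\chi_R)(\bar x,\bar y)\;\lesssim\; \ell(I_1)\,\ell(I_2)\,[w]_{A_{1,\ccR}}^{2}\,w(\bar x,\bar y) \qquad \text{for a.e. } (\bar x,\bar y)\in R.
\end{equation*}

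To prove this pointwise bound I would use the decomposition $T=\I_1^{(1)}\circ \I_1^{(2)}$ together with the classical one-variable estimate
\begin{equation*}
\int_I \frac{u(z)}{|z-z_0|^{m-1}}\,dz\;\lesssim\; \ell(I)\,Mu(z_0),\qquad z_0\in I\subset\mathbb{R}^m,\ u\geq 0,
\end{equation*}
which is just a dyadic-annular decomposition of $I$ around $z_0$. Applying this first in the $y$-variables (with $m=n_2$ and $u=w(x,\cdot)\chi_{I_2}$) and then in the $x$-variables (with $m=n_1$ and the input being $x\mapsto M^{(2)}w(x,\bar y)\chi_{I_1}$ produced by the first step), one obtains
\begin{equation*}
T(w\chi_R)(\bar x,\bar y) \;\lesssim\; \ell(I_1)\,\ell(I_2)\, M^{(1)}M^{(2)}w(\bar x,\bar y),
\end{equation*}
where $M^{(i)}$ is the Hardy-Littlewood maximal operator acting only on the $i$-th group of variables.

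The final step is to absorb the iterated maximal into $w$ by using the $A_{1,\ccR}$ hypothesis twice. A Lebesgue differentiation argument in the missing group of variables gives $M^{(i)}v\leq M_{\ccR}v$ a.e.\ for any non-negative $v$ and $i=1,2$, while the definition of $A_{1,\ccR}$ yields $M_{\ccR}w\leq [w]_{A_{1,\ccR}}w$ a.e. Concatenating these two facts,
\begin{equation*}
M^{(1)}M^{(2)}w \;\leq\; [w]_{A_{1,\ccR}}\,M^{(1)}w \;\leq\; [w]_{A_{1,\ccR}}^{2}\,w \qquad \text{a.e.,}
\end{equation*}
which closes the estimate and produces exactly the claimed constant. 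The main (though mild) obstacle is the careful justification of the pointwise domination $M^{(i)}w\leq M_{\ccR}w$, and it is precisely this domination that forces one factor of $[w]_{A_{1,\ccR}}$ per ``direction'' in the composition defining $T$, explaining the quadratic dependence. Pushing the same strategy to $p>1$ (as in Theorem~\ref{thm:debil multiparametrico}) requires using a weak-type bound at the first step and a strong-type bound at the second, which is the structural reason why the one-parameter $L^1$ trick cannot be extended to the general case and the mixed exponent $\tfrac1p+\tfrac{1}{p-1}$ appears there instead.
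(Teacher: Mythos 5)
Your proof is correct and follows essentially the same route as the paper's: Fubini transfers the weight to the adjoint side, the one-variable Riesz-kernel-to-maximal bound is applied once in each group of variables, and the $A_{1,\ccR}$ condition absorbs each maximal function, yielding the factor $\ell(I_1)\ell(I_2)[w]_{A_{1,\ccR}}^{2}$. The only cosmetic difference is that the paper absorbs the maximal functions via the slice property $[w^x]_{A_1(\mathbb{R}^{n_2})}\le [w]_{A_{1,\ccR}}$ (and likewise in the other variable), whereas you use the a.e.\ domination $M^{(i)}v\le M_{\ccR}v$ followed by $M_{\ccR}w\le [w]_{A_{1,\ccR}}w$; both are standard and give the same bound.
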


As a consequence, the following theorem shows that the $(p,p)$-Poincar\'e inequality can be obtained directly but with larger bounds.

\begin{theorem}\label{thm:fuerte multiparametrico - p>1} 
Let $w$ be a weight in $A_{p,\ccR}$ for $p>1$.
Then, for any $R= I_1 \times I_2$ in $\ccR$ any $f\in C^2(R)$ we have that 
\begin{equation*}
\|f-\pi_{R}(f)\|_{L^{p}(R,w)} \lesssim [w]_{A_{p, \ccR}}^{\frac{2}{p-1} } \ell(I_1) \ell(I_2) 
\,\|\nabla_x \nabla_y f\|_{L^{p}(R,w)}. 
\end{equation*}
\end{theorem}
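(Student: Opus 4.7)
The plan is to iterate a sharp one-parameter weighted Poincar\'e inequality in each coordinate direction, exploiting the slice property of $A_{p,\ccR}$ weights. The key observation is that for any $w\in A_{p,\ccR}$ and a.e.\ $x\in\mathbb{R}^{n_1}$, the slice $w(x,\cdot)$ lies in the one-parameter class $A_p(\mathbb{R}^{n_2})$ with constant bounded by $[w]_{A_{p,\ccR}}$, and symmetrically for slices in the other variable. This is a standard consequence of Lebesgue differentiation applied to the definition \eqref{eq:Ap-ccR} on thinning rectangles $I_\varepsilon\times J$.

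First, I would split the non-standard oscillation by setting $g(x,y):=f(x,y)-f_{I_1}^y(y)$. Using $(f_{I_1}^y)_{I_2}=f_{I_1\times I_2}$, one checks immediately that $g_{I_2}^x(x)=f_{I_2}^x(x)-f_{I_1\times I_2}$, so that
\begin{equation*}
f(x,y)-\pi_R(f)(x,y)=g(x,y)-g_{I_2}^x(x).
\end{equation*}
For each fixed $x\in I_1$, I would then apply the one-parameter weighted Poincar\'e inequality to the slice $g(x,\cdot)$ on $I_2$. The classical pointwise estimate $|g(x,y)-g_{I_2}^x(x)|\lesssim \ell(I_2)\,M^{(2)}(|\nabla_y g(x,\cdot)|\chi_{I_2})(y)$ (from $|h-h_J|\lesssim \I_1(|\nabla h|\chi_J)\lesssim \ell(J) M(|\nabla h|\chi_J)$ on $J$), combined with Buckley's sharp bound $\|M^{(2)}\|_{L^p(w(x,\cdot))}\lesssim [w(x,\cdot)]_{A_p}^{1/(p-1)}$ and the slice property, gives after integration in $x$
\begin{equation*}
\|g-g_{I_2}^x\|_{L^p(R,w)}\lesssim \ell(I_2)\,[w]_{A_{p,\ccR}}^{\frac{1}{p-1}}\,\|\nabla_y g\|_{L^p(R,w)}.
\end{equation*}

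For the second step, since the $y$-derivative commutes with $x$-averaging, $\nabla_y g=\nabla_y f-(\nabla_y f)_{I_1}^y$. I would then apply the same one-parameter Poincar\'e, this time in the $x$-direction, to each component $\partial_{y_j}f$ with weight $w(\cdot,y)$, and recombine via $\ell^2/\ell^p$ equivalence at the cost of a dimensional constant, to obtain
\begin{equation*}
\|\nabla_y g\|_{L^p(R,w)}\lesssim \ell(I_1)\,[w]_{A_{p,\ccR}}^{\frac{1}{p-1}}\,\|\nabla_x\nabla_y f\|_{L^p(R,w)}.
\end{equation*}
Chaining the two estimates produces exactly the factor $\ell(I_1)\ell(I_2)\,[w]_{A_{p,\ccR}}^{2/(p-1)}$ claimed in the theorem. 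The main technical point I expect to require care is the quantitative form of the slice property together with the uniform application of Buckley's sharp constant in the transverse variable; once these are in hand, the two Buckley exponents $1/(p-1)$ multiply to produce the stated final exponent $2/(p-1)$.
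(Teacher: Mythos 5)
Your argument is correct and yields the stated bound with the right dependence $[w]_{A_{p,\ccR}}^{2/(p-1)}$ and the factor $\ell(I_1)\ell(I_2)$: the identity $f-\pi_R(f)=g-g_{I_2}^x$ with $g=f-f_{I_1}^y$ checks out, and the slice property $[w^x]_{A_p(\mathbb{R}^{n_2})}\le [w]_{A_{p,\ccR}}$ that you invoke is exactly the one the paper quotes from \cite{GCRdF}. Your route differs from the paper's in the reduction step. The paper takes the biparametric pointwise estimate \eqref{eq:pointwise-oscillation-fractional} (Lemma \ref{lem:pointwise-oscillation-fractional}, quoted from \cite{CUMP-2004}) as a black box, dominates $T$ by $\ell(I_1)\ell(I_2)\, M^{n_1}\circ M^{n_2}$ via Lemma \ref{lem:max y frac}, and then applies Buckley's strong bound (Theorem \ref{thm:buckley}) on each family of slices. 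You instead bypass Lemma \ref{lem:pointwise-oscillation-fractional} entirely, rewriting the non-standard oscillation as two nested one-parameter oscillations and applying the classical estimate \eqref{eq:pointwise1} together with Lemma \ref{lem:max y frac} and Buckley once in each variable, with the intermediate quantity $\|\nabla_y f-(\nabla_y f)_{I_1}^y\|_{L^p(R,w)}$. The analytic core---two applications of Buckley's sharp $L^p$ bound on slices---is identical, which is why the constants agree; what your version buys is self-containedness, since it effectively re-derives an integrated form of the CUMP pointwise lemma, at the price of some bookkeeping in recombining the components $\partial_{y_j}f$, which you correctly flag as costing only a dimensional constant.
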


Observe that the behaviour of the exponent as $p\to1$ is not the expected one in view of  \eqref{eq:(1,1)-Poicare-ProductA1}.  We remedy the situation by including here a variation of both Theorem \ref{thm:debil multiparametrico} and Theorem \ref{thm:fuerte multiparametrico - p>1} by means of an  extrapolation type argument.

\begin{theorem}\label{thm: strongPI} 
Let $w$ be a weight in $A_{p,\ccR}$ for $p>1$.
Then, for any $R= I_1 \times I_2$ in $\ccR$ and any $f\in C^2(R)$ we have that 
\begin{equation}\label{eq:stronPI}
\|f-\pi_{R}(f)\|_{L^{p}(R,w)} \lesssim [w]_{A_{p, \ccR}}^{\min\{4, \frac{2}{p-1} \} } \ell(I_1) \ell(I_2) 
\,\|\nabla_x \nabla_y f\|_{L^{p}(R,w)}.
\end{equation}
\end{theorem}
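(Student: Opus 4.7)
The plan is to combine the strong estimate of Theorem \ref{thm:fuerte multiparametrico - p>1}, which already supplies the exponent $2/(p-1)$, with an extrapolation argument starting from the $A_{1,\ccR}$ bound of Proposition \ref{pro:specialPoin11}. Since $\min\{4,2/(p-1)\}=2/(p-1)$ whenever $p\ge 3/2$, Theorem \ref{thm:fuerte multiparametrico - p>1} alone already gives \eqref{eq:stronPI} in that range, so the new content lies in producing a uniform bound with exponent $4$ for $p\in(1,3/2)$.

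For the extrapolation step I would fix $R=I_1\times I_2\in\ccR$ and regard the pair $(F,G)=\bigl(\chi_R\,|f-\pi_R(f)|,\,\ell(I_1)\ell(I_2)\chi_R\,|\nabla_x\nabla_y f|\bigr)$ as the input to a biparameter version of Rubio de Francia's extrapolation theorem. Proposition \ref{pro:specialPoin11} provides the base estimate $\|F\|_{L^1(v)}\le C[v]_{A_{1,\ccR}}^{2}\|G\|_{L^1(v)}$ for every $v\in A_{1,\ccR}$. Given $w\in A_{p,\ccR}$ and a dual-testing function $h\ge 0$ with $\|h\|_{L^{p'}(w)}=1$, I would run the Rubio de Francia algorithm built from the strong maximal operator $M_\ccR$ acting on $L^{p'}(w^{1-p'})$ to construct $H\ge h$ with $\|H\|_{L^{p'}(w)}\lesssim 1$ and $Hw\in A_{1,\ccR}$. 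Feeding $v=Hw$ into the base estimate and using H\"older's inequality together with duality yields
$$\|F\|_{L^p(w)}\lesssim [Hw]_{A_{1,\ccR}}^{2}\,\|G\|_{L^p(w)},$$
so the whole question reduces to controlling $[Hw]_{A_{1,\ccR}}$ by the operator norm of $M_\ccR$ on $L^{p'}(w^{1-p'})$.

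The main obstacle is the sharp weighted estimate for the biparameter strong maximal $M_\ccR$: unlike the one-parameter Buckley exponent $1/(p'-1)$, iterating in each of the two variables produces the worse exponent $2/(p'-1)=2(p-1)$. Squaring this to absorb the $[\,\cdot\,]_{A_{1,\ccR}}^{2}$ of the base estimate and using the duality identity $[w^{1-p'}]_{A_{p',\ccR}}=[w]_{A_{p,\ccR}}^{1/(p-1)}$, the factor $p-1$ cancels and the exponent on $[w]_{A_{p,\ccR}}$ stabilizes at $4$, uniformly in $p>1$. Combining this extrapolated bound with Theorem \ref{thm:fuerte multiparametrico - p>1} and taking the minimum of the two exponents yields \eqref{eq:stronPI}. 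The delicate bookkeeping required to track sharp constants through the Rubio de Francia algorithm in the biparameter setting, and in particular the verification that the $M_\ccR$-based algorithm produces genuine $A_{1,\ccR}$ majorants with the expected sharp constant, is the technical core of the argument.
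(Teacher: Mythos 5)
Your proposal is correct and follows essentially the same route as the paper: the exponent $2/(p-1)$ comes from Theorem \ref{thm:fuerte multiparametrico - p>1}, and the uniform exponent $4$ comes from a Rubio de Francia iteration that majorizes the dual-testing function $h$ by an $H$ with $Hw\in A_{1,\ccR}$ and $[Hw]_{A_{1,\ccR}}\lesssim \|M_{\ccR}\|_{L^{p'}(w^{1-p'})}\lesssim [w]_{A_{p,\ccR}}^{2}$, after which Proposition \ref{pro:specialPoin11} applied with the weight $Hw$ and H\"older's inequality give the claim. Your formulation via $M_{\ccR}$ on $L^{p'}(w^{1-p'})$ applied to $hw$ is just the conjugate of the paper's ``dual'' operator $M_{\ccR}'f=M_{\ccR}(fw)/w$ on $L^{p'}(w)$, and your bookkeeping ($2(p-1)\cdot\frac{1}{p-1}=2$, squared to $4$) matches the paper's.
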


This theorem will be a consequence of a more precise result from Theorem \ref{thm: strong-extrapolation} below involving a ``dual maximal operator'' presented in Section \ref{sec:extrapol}.

\subsection{Fractional Integrals} 
\label{Definicionesprevias}

We include here the needed definitions about fractional integrals and $A_p$ classes of weights.
For $0<\alpha<n$, the fractional integral operator or Riesz
potential  $\I_\alpha$ is defined by 
\begin{equation*}
\I_\alpha f(x)=\int_{R^n} \frac{f(y)}{|x-y|^{n-\alpha}}dy.  
\end{equation*}
A key ingredient in our proofs is the  pointwise estimate in Lemma \ref{lem:max y frac} involving the fractional integral and the Hardy-Littlewood maximal function $M$ defined by
\begin{equation*}\label{eq:M}
Mf(x) = \sup_{Q\ni x } \avgint_{Q} |f(y)|\ dy, 
\end{equation*}
where the supremum is taken over all cubes $Q\subset \mathbb{R}^n$ with sides parallel to the coordinate axes containing the point $x$.

\begin{lemma}\label{lem:max y frac} For every cube $Q\subset \mathbb{R}^n$ we have
\begin{equation*}
\I_1(g\chi_Q)(x)\leq C_n \ell(Q)M(\chi_Q g)(x), \qquad x\in Q.
\end{equation*}
\end{lemma}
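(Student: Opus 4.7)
The plan is to use a dyadic annular decomposition of the integral defining $\I_1(g\chi_Q)(x)$ around the point $x$. First I would note that for $x\in Q$, any $y\in Q$ satisfies $|x-y|\leq \sqrt{n}\,\ell(Q)=:d$, so the support of the integrand lies in the ball $B(x,d)$. Then I would partition $B(x,d)$ into dyadic annuli $A_k=\{y:2^{-k-1}d<|x-y|\leq 2^{-k}d\}$ for $k\geq 0$ and write
$$\I_1(g\chi_Q)(x) \leq \sum_{k=0}^{\infty}\int_{Q\cap A_k}\frac{|g(y)|}{|x-y|^{n-1}}\,dy.$$

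Next, on each annulus $A_k$ the denominator satisfies $|x-y|^{n-1}\geq (2^{-k-1}d)^{n-1}$, and the set $Q\cap A_k$ is contained in a cube $Q_k$ containing $x$ whose side length is comparable to $2^{-k}d$ (taking $Q_k$ to be a cube centered at $x$ of side $2\cdot 2^{-k}d$, which covers $B(x,2^{-k}d)$). Therefore
$$\int_{Q\cap A_k}\frac{|g(y)|}{|x-y|^{n-1}}\,dy \leq (2^{-k-1}d)^{1-n}\int_{Q_k}|g(y)|\chi_Q(y)\,dy \leq (2^{-k-1}d)^{1-n}\,|Q_k|\,M(g\chi_Q)(x),$$
where the last step uses the definition of $M$ through cubes containing $x$. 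A direct computation of the powers of $2$ gives a bound of the form $C_n\, 2^{-k}\,d\, M(g\chi_Q)(x)$ for the $k$-th term.

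Finally, summing the geometric series in $k$ and recalling $d=\sqrt{n}\,\ell(Q)$ yields $\I_1(g\chi_Q)(x)\leq C_n\,\ell(Q)\,M(g\chi_Q)(x)$, with $C_n$ depending only on the dimension. There is no real obstacle here; the only mildly delicate point is that the paper's maximal function $M$ is taken over cubes rather than balls, which is precisely why I would replace each ball $B(x,2^{-k}d)$ by the enclosing cube $Q_k$ of comparable measure at the beginning of the estimate.
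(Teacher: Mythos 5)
Your argument is correct and complete: the dyadic annular decomposition around $x$, the lower bound $|x-y|\geq 2^{-k-1}d$ on each annulus, the replacement of the ball $B(x,2^{-k}d)$ by an enclosing cube containing $x$ (which matters since the paper's $M$ is defined over cubes), and the summation of the resulting geometric series all check out, giving $C_n=c\,4^{n-1}\sqrt{n}$ up to constants. The paper states this lemma without proof, as it is a classical estimate, and the argument you give is precisely the standard one.
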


In addition, we need to recall the following very well know result about the sharp weighted bound for the Hardy-Littlewood maximal function. To that end, we recall here the definition of Muckenhoupt weights $A_p$ for cubes in $\mathbb{R}^n$.

\begin{definition}
For a given $p\in (1,\infty)$, the Muckenhoupt $A_{p}$ of weights is defined by the condition
\begin{equation}\label{eq:Ap}
[w]_{A_p}:=\sup_{Q}\left(\frac{1}{|Q|}\int_{Q}w(y)\ dy \right)\left(\frac{1}{|Q|}\int_{Q}w(y)^{1-p'}\ dy \right)^{p-1}<\infty, 
\end{equation}
where the supremum is taken over all the cubes $Q$ in $\mathbb{R}^n$.
The limiting case of \eqref{eq:Ap} when $p=1$, defines the class $A_1$; that is, the set of weights $w$ such that
\begin{equation*}
[w]_{ A_1}:=\sup_{Q}\bigg(\avgint_Q w\,dx \bigg) \esssup_{Q} (w^{-1})<+\infty.
\end{equation*}
This is equivalent to $w$ having the property
\begin{equation*}
 Mw(x)\le [w]_{A_1}w(x)\qquad \text{ a.e. } x \in \mathbb{R}^n.
\end{equation*}
\end{definition}

We need to consider, in our bi-parametric setting, the $A_p$ property for the slices $w^x$ and $w^y$. We remark here that
according to \cite[Lemma  6.2]{GCRdF} we have that $w^x \in A_p(\mathbb{R}^{n_2})$ 
and moreover $[w^x]_{A_p(\mathbb{R}^{n_2} )}\leq [w]_{A_{p, \ccR}}$. The same holds for $w^y=w(x,y)$ with $ y \in \mathbb{R}^{n_2}$.

We recall now Buckley's result on sharp weighted $L^p$ norms.
\begin{theorem}\label{thm:buckley} \cite{Buckley} If $1<p<\infty$ and $w\in A_p$ then
	\begin{equation}
	\|Mf\|_{L^{p,\infty}(w)}\lesssim [w]_{A_p}^{1/p}\|f\|_{L^p(w)}
	\end{equation}
	\begin{equation}\|Mf\|_{L^p(w)}\lesssim [w]^\frac{1}{p-1}_{A_p}\|f\|_{L^p(w)}
	\end{equation}
	where in each case the exponents are best possible.
\end{theorem}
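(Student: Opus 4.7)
My plan is to derive Theorem \ref{thm: strongPI} by splitting the range of $p$ at $p_0=3/2$ and invoking the biparametric extrapolation theorem referenced as Theorem \ref{thm: strong-extrapolation} to handle the hard range. The first observation is that when $p\ge 3/2$ we have $2/(p-1)\le 4$, so $\min\{4,2/(p-1)\}=2/(p-1)$, and the desired inequality follows immediately from Theorem \ref{thm:fuerte multiparametrico - p>1}. Hence the only new content lies in the range $1<p<3/2$, where the exponent $2/(p-1)$ blows up as $p\to 1^{+}$ and has to be replaced by the constant $4$.

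For $1<p<3/2$ I would take as base point $p_0=3/2$, at which Theorem \ref{thm:fuerte multiparametrico - p>1} reads
$$\|f-\pi_R f\|_{L^{3/2}(R,v)}\le C\,[v]_{A_{3/2,\ccR}}^{4}\,\ell(I_1)\ell(I_2)\,\|\nabla_x\nabla_y f\|_{L^{3/2}(R,v)}$$
for every $v\in A_{3/2,\ccR}$, every $R=I_1\times I_2\in\ccR$ and every $f\in C^2(R)$. I would then apply Theorem \ref{thm: strong-extrapolation} to the family of pairs $(f-\pi_R f,\;\ell(I_1)\ell(I_2)|\nabla_x\nabla_y f|)$: that theorem transports the above inequality to $L^p(w)$ for every $1<p<3/2$ and every $w\in A_{p,\ccR}$, and its content is precisely that this transfer is carried out while preserving the exponent $4$. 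Gluing the two regimes yields $[w]_{A_{p,\ccR}}^{\min\{4,\,2/(p-1)\}}$ for all $p>1$, as claimed.

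The delicate point, which is where the work of Theorem \ref{thm: strong-extrapolation} lies, is that a naive sharp one-parameter extrapolation pushed downward from $p_0$ would multiply the base exponent by $(p_0-1)/(p-1)$, reproducing exactly the bound $2/(p-1)$ we already have. The improvement is obtained by running the Rubio de Francia iteration on the dual side, using a biparametric \emph{dual maximal operator} acting on $L^{p'}(\sigma)$ with $\sigma=w^{1-p'}$. Sharp biparametric Buckley-type bounds for that operator, together with the duality identity $[w^{1-p'}]_{A_{p',\ccR}}=[w]_{A_{p,\ccR}}^{1/(p-1)}$, allow one to produce an $A_{1,\ccR}$-weight whose constant is controlled by $[w]_{A_{p,\ccR}}$ with an exponent that compensates the $1/(p-1)$ blow-up exactly, leaving the fixed exponent $4$ in the final estimate. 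Establishing this sharp biparametric control inside the dual Rubio de Francia algorithm is the main obstacle.
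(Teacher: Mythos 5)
Your proposal does not address the statement you were asked to prove. The statement is Buckley's theorem: the sharp weighted bounds $\|Mf\|_{L^{p,\infty}(w)}\lesssim [w]_{A_p}^{1/p}\|f\|_{L^p(w)}$ and $\|Mf\|_{L^p(w)}\lesssim [w]_{A_p}^{1/(p-1)}\|f\|_{L^p(w)}$ for the Hardy--Littlewood maximal operator, together with the optimality of both exponents. What you have written instead is a strategy for deriving Theorem \ref{thm: strongPI} (the biparametric Poincar\'e inequality with exponent $\min\{4,2/(p-1)\}$) by interpolating between Theorem \ref{thm:fuerte multiparametrico - p>1} and the extrapolation result. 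That is a different theorem entirely; in the logical structure of the paper, Buckley's theorem is an \emph{input} to the proofs of Theorems \ref{thm:debil multiparametrico} and \ref{thm:fuerte multiparametrico - p>1}, not a consequence of them, so nothing in your argument can establish it. (The paper itself does not reprove Buckley's theorem; it cites \cite{Buckley}.)

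For the record, a proof of the actual statement would run along the following lines. The weak-type bound with exponent $1/p$ follows from the definition of $A_p$ together with H\"older's inequality on each cube and a covering argument: for $x$ in a cube $Q$ one has $\left(\avgint_Q|f|\right)^p\le [w]_{A_p}\,\frac{1}{w(Q)}\int_Q|f|^p w$, and summing over a Vitali-type selection of cubes from the level set $\{Mf>\lambda\}$ gives $w(\{Mf>\lambda\})\lesssim [w]_{A_p}\lambda^{-p}\|f\|_{L^p(w)}^p$. The strong bound with exponent $1/(p-1)$ requires more: Buckley's original argument uses the sharp reverse H\"older inequality for the dual weight $\sigma=w^{1-p'}$ to lower the exponent $p$ slightly and then interpolate, while the now-standard shorter proof (due to Lerner) deduces it from the pointwise domination $Mf(x)\lesssim [w]_{A_p}^{1/(p-1)}\bigl(M_{\sigma}(|f|w^{1/( p-1)} \cdot \sigma^{-1})\bigr)$-type identities combined with the universal boundedness of $M_\sigma$ on $L^p(\sigma)$. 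Sharpness of both exponents is exhibited by the power weights $w(x)=|x|^{(1-\varepsilon)(p-1)}$ tested against $f(x)=|x|^{-1+\varepsilon}\chi_{B(0,1)}(x)$ as $\varepsilon\to 0^+$. None of these ingredients appear in your proposal, so as a proof of Theorem \ref{thm:buckley} it is vacuous.
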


\section{Proofs for  Biparameter  Poincaré inequalities} \label{sec:MixedGradient}

We include here the proofs of the main results.

\begin{proof}[Proof of Theorem \ref{thm:debil multiparametrico}]

According to inequality \eqref{eq:pointwise-oscillation-fractional} and using  Lemma \ref{lem:max y frac} twice on each direction $n_i$, we have
\begin{eqnarray*}
\|f-\pi_{R}(f)\|_{L^{p,\infty}(R,w)}  & \leq & c_n
\| T(|\nabla_x \nabla_y f \chi_{R}|) \|_{L^{p,\infty}(R,w)}\\
& \le & c_{n}\ell(I_1)\ell(I_2) 
\| M^{n_2} \circ M^{n_1}(|\nabla_x \nabla_y f \chi_{R}|) \|_{L^{p,\infty}(R,w)}
\end{eqnarray*}
where we denote $M^{n_1}$ and $M^{n_2}$ the $n_1$-dimensional and $n_2$-dimensional Hardy-Littlewood maximal operator respectively. 


Now, for a fixed $\lambda>0$, we denote 
\begin{equation*}
\Omega_{\lambda}:=\{(x,y) \in R: M^{n_2} \circ M^{n_1}(|\nabla_x \nabla_y f \chi_{R}|)(x,y) > \lambda  \}.
\end{equation*}
Also, for each fixed $x \in I_1$ we put
\begin{equation*}
\Omega_{\lambda}^x:=\{ (x,y) : y \in I_2, M^{n_2} \circ M^{n_1}(|\nabla_x \nabla_yf \chi_{R}|)(x,y) > \lambda  \}.
\end{equation*}
Then
\begin{equation*}
w(\Omega_{\lambda})=\int_{\Omega_{\lambda}}w(x,y)\,dx\,dy=\int_{I_1}w^x(\Omega_{\lambda}^x)\,dx.
\end{equation*}
Here, we use again the standard notation of $w^x(y)=w(x,y)$ to denote the slice of the function $w$ for a fixed $x\in I_1$. Now we apply Theorem \ref{thm:buckley} as follows. The first step is to use the sharp weak type  bound for $M^{n_2}$ with the weight $w^x(y)$. Then, 
\begin{align*}
w(\Omega_{\lambda}) &\leq c_{n_2}\frac{[w]_{A_{p, \ccR}}^p}{\lambda^p} \int_{I_1} M^{n_1}(|\nabla_x \nabla_y f \chi_{R}|)(x,y)^p\,dxdy\\
&\leq c_{n,p}\frac{ [w]_{A_{p, \ccR}}^p [w]_{A_{p, \ccR}}^{p'}}{\lambda^p} 
\int_{I_1} |\nabla_x \nabla_y f \chi_{R}(x,y)|^p\,dxdy
\end{align*}
by applying the sharp $(p,p)$ strong bound for $M^{n_1}$ considering now the weight $w^y(x) \in A_p (\mathbb{R}^{n_1})$. This concludes the proof immediately.
\end{proof}

\begin{remark} Note that, as we mentioned before, this argument can not be applied to the case $p=1$, given that the Hardy-Littlewood operator is not bounded in $L^1$.
\end{remark}

We include now the proof of the strong bound for $p>1$ stated in Theorem \ref{thm:fuerte multiparametrico - p>1}.

\begin{proof}[Proof of Theorem \ref{thm:fuerte multiparametrico - p>1}]
Let's write $I=	\int_{R} |f-\pi_{R}(f)|^pw\,dx\,dy $ and recall that $w^x$ is a function on $y$. Then
\begin{eqnarray*}
I &\lesssim & \ell(I_1)^p\ell(I_2)^p \int_{R}[ M^{n_1}  M^{n_2}(|\nabla_x \nabla_y  f \chi_{R}|)(x,y)]^p w(x,y)\, dxdy\\ 
	&\approx &\ell(I_1)^p\ell(I_2)^p\int_{I_1}\int_{I_2}M^{n_1}(M^{n_2}(|\nabla_x \nabla_y f \chi_{R}|))(x,y)^pw^x(y)\,dy \,dx \\ 
&\lesssim &\ell(I_1)^p\ell(I_2)^p[w^x]^{\frac{p}{p-1}}_{A_p(\mathbb{R}^{n_1})}\int_{I_1}\int_{I_2}[M^{n_2}(|\nabla_x \nabla_y f \chi_{R}|)(x,y)]^p w^y(x)\,dxdy\\ 
&\lesssim  & \ell(I_1)^p\ell(I_2)^p [w]_{A_{p, \ccR}}^{\frac{p}{p-1}}[w^y]^{\frac{p}{p-1}}_{A_p(\mathbb{R}^{n_2})} \int_{I_2}\int_{I_1} |\nabla_x \nabla_y f(x,y)|^pw(x,y)\,dy\,dx \\
&\approx &\ell(I_1)^p\ell(I_2)^p [w]_{A_{p, \ccR}}^{\frac{2p}{p-1}} \int_{I_2}\int_{I_1} |\nabla_x \nabla_y f(x,y)|^pw(x,y)\,dy \,dx
\end{eqnarray*}
where in first step we use estimate (\ref{eq:pointwise-oscillation-fractional}) and Lemma \ref{lem:max y frac}. Finally, we apply the sharp bound of the Hardy-Litllewood operator twice, first in $L^p_{w^x}(\mathbb{R}^{n_2})$ and then in $L_{w^y}^p(\mathbb{R}^{n_1})$.
\end{proof}

Now we present the proof of the natural substitute valid for $p=1$.

\begin{proof}[Proof of Proposition \ref{pro:specialPoin11}]
Let's call $I$ to the LHS on \eqref{eq:(1,1)-Poicare-ProductA1}. Then, by Lemma \ref{lem:pointwise-oscillation-fractional} and recalling that 
\begin{equation*}
Tf(x,y)=\int_{\mathbb{R}^{n_1}}\int_{\mathbb{R}^{n_1}} \frac{f(\bar{x},\bar{y})}{|x-\bar{x}|^{n_1-1}|y-\bar{y}|^{n_2-1}}\,d\bar{x}\,d\bar{y}.
\end{equation*}
\begin{eqnarray*}
I &\lesssim & \int_R T(|\nabla_x \nabla_y f \chi_{R}|)w\, dx dy\\
&\lesssim &\int_{R} \int_{R} \frac{|\nabla_x \nabla_y f(\bar{x},\bar{y})|}{|x-\bar{x}|^{n-1}|y-\bar{y}|^{m-1}}\, d\bar{x} d\bar{y}\,w dx dy\\
&\lesssim & \int_{R} |\nabla_x \nabla_y f(\bar{x},\bar{y})| \left[\int_{I_1} \frac{1}{|x-\bar{x}|^{n-1}}  \int_{I_2} \frac{w^x(y)}{|y-\bar{y}|^{m-1}} \, dy\,dx\right] \, d\bar{x} d\bar{y}\\
&\lesssim &\ell(I_2) [w]_{A_{1, \ccR}}
	\int_{R} |\nabla_x \nabla_y f(\bar{x},\bar{y})|\int_{I_1}\frac{w^{\bar{y}}(x)}{|x-\bar{x}|^{n-1}}\,dx\,d\bar{x}\,d\bar{y}\\
	& \lesssim & [w]_{A_{1, \ccR}}^2 \ell(I_1)\ell(I_2)\int_{R}|\nabla_x \nabla_y f(\bar{x},\bar{y})|w(\bar{x},\bar{y})\,d\bar{x}\,d\bar{y},
\end{eqnarray*}
where we have used the following inequalities 
\begin{eqnarray*}
\int_{I_2} \frac{w^{x}(y)}{|y-\bar{y}|^{m-1}} \, dy & \lesssim & \ell(I_2)\, [w]_{A_{1, \ccR}}\,w(x,\bar{y}),\\
\int_{I_1} \frac{w^{\bar{y}}(x)}{|x-\bar{x}|^{n-1}} \, dx
& \lesssim &\ell(I_1)\,[w]_{A_{1, \ccR}}\,w(\bar{x},\bar{y}) \,  .
\end{eqnarray*}
\end{proof}

\section{non-standard Poincaré inequalities and Extrapolation}\label{sec:extrapol}

In this section we will prove Theorem \ref{thm: strongPI} using ideas from extrapolation.

Define, as usual, the maximal operator:
\begin{equation}\label{Maximal-ccR}
M_{\ccR}f(x):=\sup_{x\in R \in \ccR} 
\frac{1}{|R|}\int_R |f(y)|\,dy.
\end{equation}
Using the know weighted estimate on each factor, we obtain the estimate for any $p\in(1,\infty)$:
\begin{equation}\label{eq:Buckley}
\|M_{\ccR}\|_{L^p(w)}\lesssim  [w]_{A_{p, \ccR}}^{\frac{2}{p-1} }.
\end{equation}
We also need  the ``dual" operator 
\begin{equation}\label{dualMaximal-ccR}
M_{\ccR}'f := \frac{M_{\ccR}(fw)}{w}.
\end{equation}
Using that $w^{1-p'}\in A_{p',\ccR}$ and also that $M$ is bounded on
$L^{p'}(w^{1-p'})$, we conclude that $M'$ is bounded on $L^{p'}(w)$ and 
\begin{equation}\label{eq:dual-bound}
\|M'_{\ccR}\|_{L^{p'}(w)}\lesssim  [w]_{A_{p, \ccR}}^{2 }.
\end{equation}

We have the following intermediate result:

\begin{theorem}\label{thm: strong-extrapolation} 
Let $w$ be a weight in $A_{p,\ccR}$ for $p>1$.
Then, for any $R= I_1 \times I_2$ in $\ccR$ and any $f\in C^2(R)$ we have that 
\begin{equation}\label{eq:(p,p)-M'}
\|f-\pi_{R}(f)\|_{L^{p}(R,w)}  \lesssim \|M_{\ccR}'\|_{L^{p'}(w) }^{2} \ell(I_1) \ell(I_2) 
\,\|\nabla_x \nabla_y f\|_{L^{p}(R,w)}
\end{equation}
\end{theorem}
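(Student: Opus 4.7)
The plan is to reduce Theorem \ref{thm: strong-extrapolation} to the weighted $(1,1)$ Poincar\'e inequality of Proposition \ref{pro:specialPoin11} via a Rubio de Francia extrapolation scheme adapted to the ``dual'' maximal $M'_{\ccR}$. First I would dualize: writing $F := f - \pi_R(f)$, the $L^p(w)$ norm can be expressed as
$$\|F\|_{L^{p}(R,w)} = \sup \left\{ \int_R |F|\, g\, w\, dx\, dy \,:\, g \geq 0,\ \|g\|_{L^{p'}(w)} \leq 1 \right\}.$$

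Fix such a $g$ and define the Rubio de Francia iterate
$$G := \sum_{k=0}^{\infty} \frac{(M'_{\ccR})^k g}{(2\,\|M'_{\ccR}\|_{L^{p'}(w)})^{k}}.$$
The standard geometric-series computation yields three properties: (i) $g \leq G$; (ii) $\|G\|_{L^{p'}(w)} \leq 2\,\|g\|_{L^{p'}(w)}$; and (iii) $M'_{\ccR} G \leq 2\,\|M'_{\ccR}\|_{L^{p'}(w)}\, G$. In view of the definition \eqref{dualMaximal-ccR}, property (iii) is exactly $M_{\ccR}(Gw) \leq 2\,\|M'_{\ccR}\|_{L^{p'}(w)}\, Gw$, i.e. $Gw \in A_{1,\ccR}$ with $[Gw]_{A_{1,\ccR}} \leq 2\,\|M'_{\ccR}\|_{L^{p'}(w)}$.

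Next I would apply Proposition \ref{pro:specialPoin11} to the weight $W := Gw$, giving
$$\int_R |F|\, Gw\, dx\, dy \lesssim \ell(I_1)\ell(I_2)\, [Gw]^2_{A_{1,\ccR}} \int_R |\nabla_x\nabla_y f|\, Gw\, dx\, dy.$$
Combining with (i) on the left, using the $A_{1,\ccR}$ bound above, and then applying H\"older's inequality with exponents $p,p'$ to the measure $w\,dx\,dy$ together with (ii) on the right, we conclude
$$\int_R |F|\, g\, w\, dx\, dy \lesssim \ell(I_1)\ell(I_2)\, \|M'_{\ccR}\|_{L^{p'}(w)}^{2}\, \|\nabla_x\nabla_y f\|_{L^p(R,w)}.$$
Taking the supremum over $g$ delivers \eqref{eq:(p,p)-M'}. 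Theorem \ref{thm: strongPI} then drops out by combining this estimate with \eqref{eq:dual-bound} (which yields the $[w]_{A_{p,\ccR}}^{4}$ branch of the minimum) and Theorem \ref{thm:fuerte multiparametrico - p>1} (which yields the $[w]_{A_{p,\ccR}}^{2/(p-1)}$ branch).

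The argument is short because all the hard analysis has been absorbed into Proposition \ref{pro:specialPoin11} and into the $L^{p'}(w)$ boundedness of $M'_{\ccR}$. The only step that requires care is confirming that property (iii) translates into a genuine $A_{1,\ccR}$ condition on the biparametric weight $Gw$, but this is essentially a tautology from the definition of $M'_{\ccR}$, and Proposition \ref{pro:specialPoin11} applies to any $A_{1,\ccR}$ weight without further assumption. No additional integrability or support hypothesis on $g$ is needed, since the entire argument takes place inside the fixed rectangle $R$.
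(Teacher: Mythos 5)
Your proposal is correct and follows essentially the same route as the paper: dualize the $L^p(w)$ norm, run the Rubio de Francia algorithm built from the dual maximal operator $M'_{\ccR}$ to produce a majorant $G$ of $g$ with $Gw\in A_{1,\ccR}$ and $[Gw]_{A_{1,\ccR}}\lesssim \|M'_{\ccR}\|_{L^{p'}(w)}$, apply Proposition \ref{pro:specialPoin11} to the weight $Gw$, and finish with H\"older and the $L^{p'}(w)$ bound on $G$. Your explicit verification that property (iii) is literally the $A_{1,\ccR}$ condition for $Gw$ via the definition of $M'_{\ccR}$ is a small but welcome elaboration of a step the paper only asserts.
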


\begin{proof}

 Fix $p>1$, and let $w\in A_{p,\ccR}$.

Therefore, we can define the following Rubio de Francia type  iteration algorithm:
\[ \R' h(x) = \sum_{k=0}^\infty \frac{(M_{\ccR}')^kh(x)}{2^k\|M_{\ccR}'\|^k_{L^{p'}(w)}}. \]
where\, $M_{\ccR}' \,$ is the ``dual" operator \eqref{dualMaximal-ccR} and $(M_{\ccR}')^0=I_{d}$.  Then we have that: \\
(1) \quad $h\le \R'(h)$\\
(2) \quad $\|\R'(h)\|_{L^{p'}(w)}\le
2\,\|h\|_{L^{p'}(w)}$\\
(3) \quad  $[\R'(h)\,w]_{A_{1,\ccR}}\leq 2\,   \|M_{\ccR}'\|_{L^{p'}(w) }$
%


By duality there exists a non-negative function $h\in L^{p'}(w)$, $\|h\|_{L^{p'}(w)}=1$, supported in $R$, such that,
%
\begin{equation*} 
\|f-\pi_{R}(f)\|_{L^{p}(R,w)}
=
\int_{R} |f-\pi_{R}(f)|\, hw\,dx
\leq
\int_{R} |f-\pi_{R}(f)|\, \R' h \,w\,dx
\end{equation*}
and since  $\R'(h)\,w \in A_{1,\ccR} $ \,with\, $[\R'(h)\,w]_{A_{1,\ccR}}\leq 2\,   \|M_{\ccR}'\|_{L^{p'}(w) }$ we can apply Proposition \ref{pro:specialPoin11} to obtain
\begin{equation*} 
\|f-\pi_{R}(f)\|_{L^{p}(R,w)}
\leq c\, \ell(I_1) \ell(I_2) \|M_{\ccR}'\|_{L^{p'}(w) }^{2} \,\int_{R} |\nabla_x \nabla_y f| 
\R' h \,w\,dx.
\end{equation*}
Let us focus now in the last integral and apply H\"older inequality and the properties of $\mathcal{R}'$:
\begin{eqnarray*}
\int_{R} |\nabla_x \nabla_y f| 
\R' h \,w\,dx & \le & \left( \int_{R} |\nabla_x \nabla_y f|^p 
\,w\,dx\right)^{\frac1p}
 \,\left(\int_{\mathbb{R}^n} (\R' h)^{p'} \,w\,dx\right)^{\frac1{p'}}\\
 & \le & \left( \int_{R} |\nabla_x \nabla_y f|^p \,w\,dx\right)^{\frac1p}
 \,\left(\int_{R}  h^{p'} \,w\,dx\right)^{\frac1{p'}}\\
 & = & \left( \int_{R} |\nabla_x \nabla_y f|^p \,w\,dx\right)^{\frac1p}.
\end{eqnarray*}

\end{proof}

We are now able to present the proof of Theorem \ref{thm: strongPI} using, on one hand, the estimate from \eqref{eq:(p,p)-M'}  combined with inequality \eqref{eq:dual-bound} to obtain
\begin{equation*}
\|f-\pi_{R}(f)\|_{L^{p}(R,w)} \lesssim [w]_{A_{p, \ccR}}^4\ell(I_1) \ell(I_2) 
\,\|\nabla_x \nabla_y f\|_{L^{p}(R,w)}. 
\end{equation*}
On the other hand, we already had the estimate provided in Theorem \ref{thm:fuerte multiparametrico - p>1}. Combining the two results, we get the desired result stated in \eqref{eq:stronPI}.

\bibliographystyle{amsalpha}


\end{document}